\documentclass[12pt,twoside,reqno]{amsart}
\usepackage[colorlinks=true,citecolor=blue]{hyperref}
\usepackage{mathptmx, amsmath, amssymb, amsfonts, amsthm, mathptmx, enumerate, color,mathrsfs}
\usepackage{graphicx}

\usepackage{multirow}
\usepackage{epstopdf}
\usepackage{multicol}
\usepackage{algorithm}
\usepackage{algorithmic}
\usepackage{epstopdf}

\newtheorem{theorem}{Theorem}[section]
\newtheorem{lemma}[theorem]{Lemma}
\newtheorem{proposition}[theorem]{Proposition}

\theoremstyle{definition}

\newtheorem{example}[theorem]{Example}

\newtheorem{remark}[theorem]{Remark}

\numberwithin{equation}{section}

\begin{document}
\setcounter{page}{1}

\vspace*{2.0cm}
\title[Diffusive solutions and QM mappings]
{A note on diffusive solutions of the Lyapunov and Riccati inequalities for quasi-monotone (QM) mappings on cones}
\author[O. Mason]{O. Mason$^{1,*}$}
\maketitle
\vspace*{-0.6cm}

\begin{center}
{\footnotesize

$^1$Department of Mathematics \& Statistics, Maynooth University, Co. Kildare, Ireland \\

\vspace{5mm}

Dedicated to Professor Ezra Zeheb on the occasion of his 85th birthday.

}\end{center}

\vskip 4mm {\footnotesize \noindent {\bf Abstract.}
We consider three key properties of Metzler and nonnegative matrices and extensions of these to classes of self-dual proper convex cones.  Specifically, we study mappings that are quasi-monotone (QM) with respect to a cone $K$ and discuss results extending D-stability, diagonal Lyapunov stability, and diagonal Riccati stability to this setting.  Mappings that act diffusively with respect to the cone are used as generalisations of diagonal matrices.  Relationships with recent results for symmetric cones obtained using Jordan algebraic methods are also discussed.

 \noindent {\bf Keywords.}
 Riccati inequality; Lyapunov inequality; 
Proper cones; Quasi-monotone mapping. 

 \noindent {\bf 2020 Mathematics Subject Classification.}
93C28, 93C43, 15A24. }

\renewcommand{\thefootnote}{}
\footnotetext{ $^*$Corresponding author.
\par
E-mail address: oliver.mason@mu.ie.
\par
Received xx, x, xxxx; Accepted xx, x, xxxx.

\rightline {\tiny   \copyright  2022 Communications in Optimization Theory}}

\section{Introduction}

We consider generalisations of three classical properties of positive linear time-invariant (LTI) systems.  It is well known that the system $\dot{x} = Ax$ is positive if and only if the matrix $A$ is Metzler, meaning $a_{ij} \geq 0$ for $i \neq j$: such matrices are also sometimes referred to as \emph{cross-positive}, \emph{quasi-monotone}, or as the negative of Z matrices.  In the classical theory, positivity is understood with respect to the nonnegative orthant $\mathbb{R}^n_+$.  Stability of the system is equivalent to $A$ being Hurwitz: having all its eigenvalues in the open left half plane.  We shall refer to such matrices as stable or Hurwitz throughout.  The first two results we consider show that stable positive LTI systems are diagonally stable \cite{narsho} and D stable \cite{kush}.  The former means that there exists a positive definite diagonal solution $D$ to the Lyapunov inequality $A^TD+DA \prec 0$, while the latter means that for any positive definite diagonal $D\succ 0$, $DA$ is Hurwitz.    

The stability theory for positive LTI systems has been extended in several directions, such as switched positive linear systems \cite{durbundfuss, masonshorten}, infinite dimensional systems \cite{miron}, and time-delay systems \cite{Haddad, briat}.  The third result we consider concerns the last of these system classes.  A linear time-delay system $\dot{x} = Ax + B x(t -\tau)$ is positive if and only if $A$ is Metzler and $B$ is nonnegative.  It was shown in \cite{Haddad} that if $A+B$ is Hurwitz, then the delayed system is stable for any delay $\tau \geq 0$.  Later, inspired by a question posed by Verriest in \cite{SysProbs}, it was shown that under these same assumptions, there exists a \emph{diagonal} Lyapunov-Krasosvkii functional.  In matrix theoretic terms, this means that there exist diagonal matrices $D \succ 0$, $Q\succ 0$ satisfying 
\begin{equation}\label{eq:Ricc0}
\left(\begin{array}{c c}
A^TD+DA + Q & B^TD \\
DB & -Q 
\end{array}\right) \prec 0.
\end{equation}  
An alternative, simple proof of this result, as well as extensions to more general positive time-delay systems can be found in the paper \cite{briat}.

Systems that are positive with respect to more general cones than $\mathbb{R}^n_+$ have also been studied, and it is this direction that we will consider here.  Interesting results on copositve Lyapunov functions for switched linear systems positive with respect to cones defined by linear inequalities can be found in \cite{durbundfuss}.  Z-transformations on proper and symmetric cones were studied in \cite{gowda1} and several classical results for Z matrices \cite{BermanNeuSter} were derived in this setting.  In particular, a result on diagonal stability for Z transformations on symmetric cones was derived, using the framework of Euclidean Jordan algebras, with the \emph{quadratic representations} of the algebra playing the role of diagonal matrices.  More recently, in \cite{gowda2}, cone-preserving (nonnegative) Z transformations were studied and characterised for proper and symmetric cones.  Interestingly, it was shown that for irreducible cones, all cone-preserving Z transformations are given by nonnegative multiples of the identity, while they are characterised as 'nonnegative block diagonal' transformations for reducible cones.  In the recent paper \cite{LamRicc}, an interesting extension of the result of \cite{mason2012} was derived for time-delay systems leaving a symmetric cone invariant.  This result can also be viewed as an extension of that on diagonal Lyapunov stability in \cite{gowda1} to the case of diagonal Riccati stability.  As with the papers \cite{gowda1}, \cite{gowda2}, the machinery of Euclidean Jordan algebras was used, and the quadratic representation played the role of diagonal matrices.  

In this brief note, inspired by the results mentioned above, we consider a slight variation on the themes of the previous paragraph.  In particular, we will discuss diagonal Lyapunov and Riccati stability, and D stability for \emph{quasi-monotone} (QM) mappings on a Euclidean space equipped with a proper, self-dual cone: these mappings are a natural generalisaton of Metzler matrices and we give a formal definition of them in the following section. For some of our main results, we impose an extra technical assumption which is closely related to the homogeneity assumption in the definition of a symmetric cone.  In contrast to the powerful results in \cite{gowda1, LamRicc} for symmetric cones, we do not make any explicit use of Jordan algebraic techniques.  Also, we work with a different generalisation of diagonal matrices, which was introduced in the recent paper \cite{deLeen}.  The results of \cite{deLeen} were motivated by stability question for diffusively coupled systems arising in Ecology and linear systems with an invariant proper cone were considered.  At appropriate points in the paper, we highlight how the results here relate to those mentioned in the paragraph above.  
   
\section{Preliminaries}
Throughout, we consider a finite dimensional Euclidean space $V$, equipped with an inner product $\langle \cdot, \cdot \rangle$.  The space of linear mappings from $V$ to itself is denoted by $L(V)$.  The adjoint mapping of $A \in L(V)$ is denoted by $A^*$.  We use the notation $P \succ 0$ ($P\succeq 0$) to denote that the self-adjoint $P$ is positive definite (semi-definite).  The notations $P \prec 0$, $P\preceq 0$ have the obvious meaning.  We use $\mathcal{S}^n$ and $\mathcal{S}^n_+$ to denote the space and cone (respectively) of $n\times n$ real symmetric matrices and real positive semi-definite matrices.

A subset $K \subseteq V$ is a convex cone if $\alpha x  + \beta y \in K$ for all $x, y \in K$, $\alpha, \beta > 0$.  A proper cone is a convex cone $K \subseteq  V$ that is closed, pointed (meaning $K \cap (-K) = \{0\}$), and generating ($V = K-K$).   Denote by $\textrm{Aut}(K)$ the linear automorphisms of $K$, consisting of all invertible linear transformations $T$ of $V$ with $T(K)=K$.  If $\textrm{Aut}(K)$ acts transitively on $K$, $K$ is homogeneous. 

The dual cone $K^*$ of $K$ is given by $K^*=\{y \in V: \langle x, y \rangle \geq 0 \mbox{ for all }x \in K\}$.  $K$ is self-dual if $K=K^*$.  A proper cone is symmetric if it is self-dual and homogeneous.  If $K \subseteq V$ is a symmetric cone, then a product can be defined on $V$ such that $V$ with this product becomes a Euclidean Jordan algebra and $K$ is given by the cone of squares in $V$: formally $K = \{x^2 : x \in V\}$.  For background on symmetric cones and Euclidean Jordan algebras, the interested reader should consult the monograph \cite{FarKor}.  

Given a proper, self-dual, cone $K$ in $V$, we say that a linear mapping $T\in L(V)$ is quasi-monotone (QM) with respect to $K$ if $\langle x,y \rangle = 0$, $x,y \in K$ implies $\langle x, T y \rangle \geq 0$.  These mappings have been referred to as cross-positive in other sources: for example in \cite{LamRicc}.  It is not difficult to show that for $V = \mathbb{R}^n$, $K = \mathbb{R}^n_+$, a mapping given by the matrix $A \in \mathbb{R}^{n \times n}$ is QM with respect to $K$ if and only if $A$ is Metzler.  A mapping $T \in L(V)$ is $K$-nonnegative if $T(K) \subseteq K$.  As noted in \cite{LamRicc}, it is straightforward to show that if $A$ is QM on $K$ and $B$ is $K$-nonnegative, then $A+B$ is also QM on $K$.

Motivated by a basic question arising in theoretical ecology, QM mappings were recently studied in \cite{deLeen} in the context of diffusively coupled maps.  The key question considered was to determine stability conditions for a coupled system where the coupling between component systems was defined by a family of \emph{diffusive} mappings.  Diffusive mappings generalise diagonal matrices and it is this generalisation that we will consider here.  The mapping $D \in L(V)$ acts diffusively with respect to a proper cone $K$ if $D(K) \subseteq K$, and $x,y \in K$, $\langle x, y \rangle = 0$ implies $\langle x, D y \rangle = 0$.  

In keeping with standard terminology, we say that a mapping $T$ in $L(V)$ is stable if all the eigenvalues of $T$ have negative real part.  The following result gives a characterisation of stability for QM mappings and generalises well known results about Metzler matrices.  A proof can be found in \cite{deLeen, gowda1}, for example.

\begin{proposition}\label{prop:stab1}
Let $K\subseteq V$ be a proper cone and $A \in L(V)$ be QM with respect to $K$.  Then $A$ is stable if and only if there exists some $v \in \textrm{int}(K)$ with $-Av \in \textrm{int}(K)$. 
\end{proposition}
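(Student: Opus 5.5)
The key tool will be the exponential characterisation of quasi-monotonicity: $A$ is QM with respect to $K$ if and only if $e^{tA}(K)\subseteq K$ for all $t\ge 0$. Here QM is taken in the paper's sense, with $\langle x,y\rangle=0$ for $x\in K$, $y\in K^*$; for self-dual $K$ this is exactly the stated definition. I would also use the cone version of the Perron--Frobenius theorem: a linear map leaving a proper cone invariant has its spectral radius as an eigenvalue, with an eigenvector in $K$. Combining the two, the spectral abscissa $s(A)=\max\{\mathrm{Re}\,\lambda\}$ of a QM mapping is itself an eigenvalue of $A$, and of $A^*$, with eigenvectors in $K$ and $K^*$ respectively. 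To see this, note that for $c$ large the map $A+cI$ is $K$-nonnegative. One can argue this via $e^{tA}$ and resolvents, or more directly: $(I+A/c)^{-1}=c\int_0^\infty e^{-ct}e^{tA}\,dt$ is $K$-nonnegative. The spectral radius of $A+cI$ then equals $s(A)+c$. The same holds for $A^*$ with respect to $K^*$, since $A^*$ is QM with respect to $K^*$ by symmetry of the definition.

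\emph{Sufficiency.} Suppose $v\in\mathrm{int}(K)$ and $-Av\in\mathrm{int}(K)$. Take $w\in K^*\setminus\{0\}$ with $A^*w=s(A)w$. Then
\[
s(A)\langle w,v\rangle=\langle w,Av\rangle=-\langle w,-Av\rangle<0.
\]
The strict inequality holds because a nonzero element of $K^*$ pairs strictly positively with any interior point of $K$. Since also $\langle w,v\rangle>0$, we get $s(A)<0$, so $A$ is stable.

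\emph{Necessity.} Suppose $A$ is stable. Then $-A^{-1}=\int_0^\infty e^{tA}\,dt$ converges, and it maps $K$ into $K$. Pick any $u\in\mathrm{int}(K)$ and set $v=-A^{-1}u$, so that $-Av=u\in\mathrm{int}(K)$. It remains to show $v\in\mathrm{int}(K)$. We have $e^{tA}u\in\mathrm{int}(K)$ for all $t\ge 0$, since $e^{tA}$ is invertible and maps $K$ into $K$, hence maps the interior into the interior. Then for each $w\in K^*\setminus\{0\}$, $\langle w,v\rangle=\int_0^\infty\langle w,e^{tA}u\rangle\,dt>0$, because the integrand is positive near $t=0$ and continuous. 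Hence $v\in\mathrm{int}(K)$.

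The main obstacle is the first step, the equivalence between QM and invariance of $K$ under $e^{tA}$. I would prove it by showing that for $x\in K$ the trajectory $x(t)=e^{tA}x$ cannot leave $K$. First perturb to $A_\epsilon=A+\epsilon I$ and an interior starting point. At a first exit time, $x(t_0)\in\partial K$, so there is $w\in K^*\setminus\{0\}$ with $\langle w,x(t_0)\rangle=0$. The QM property then gives $\frac{d}{dt}\langle w,x\rangle\ge 0$ there, or $>0$ after the $\epsilon$-shift, which is a contradiction. Finally let $\epsilon\to0$ and use closedness of $K$. The rest of the argument is routine.
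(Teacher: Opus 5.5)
\textbf{Verdict.} The paper gives no proof of this proposition; it only points to the literature. Your overall architecture is the standard one: invariance of $K$ under $e^{tA}$, a Perron--Frobenius eigenvector of $A^*$ in $K^*$ for sufficiency, and $-A^{-1}=\int_0^\infty e^{tA}\,dt$ for necessity. Your necessity half is fine once $e^{tA}(K)\subseteq K$ is known. However, two supporting steps fail as written.

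\textbf{First gap: $A+cI$ need not be $K$-nonnegative.} You claim $A+cI$ is $K$-nonnegative for large $c$. This is false for non-polyhedral cones, including the self-dual ones the paper is about. Take $V=\mathcal{S}^2$, $K=\mathcal{S}^2_+$ and $A(X)=MX+XM^T$ with $M=\left(\begin{smallmatrix}0&1\\0&0\end{smallmatrix}\right)$.
\begin{itemize}
\item $A$ is QM: if $X,Y\succeq 0$ and $\textrm{trace}(XY)=0$, then $XY=YX=0$, so $\textrm{trace}(XA(Y))=0$.
\item With $E_{22}=\left(\begin{smallmatrix}0&0\\0&1\end{smallmatrix}\right)\in K$, one gets $A(E_{22})+cE_{22}=\left(\begin{smallmatrix}0&1\\1&c\end{smallmatrix}\right)$. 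This has determinant $-1$, so it is not in $K$ for any $c$.
\end{itemize}
Your justification also has problems. The correct identity is $c\int_0^\infty e^{-ct}e^{tA}\,dt=(I-A/c)^{-1}$, not $(I+A/c)^{-1}$. In any case, positivity of the resolvent does not imply positivity of $A+cI$; in this example the resolvent is $K$-nonnegative but $A+cI$ is not.

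The eigenvector you need still exists, but you must get it differently:
\begin{itemize}
\item \emph{Via Krein--Rutman.} Apply it to $e^{tA^*}=(e^{tA})^*$, which leaves $K^*$ invariant. Its spectral radius $e^{ts(A)}$ is an eigenvalue with an eigenvector $w\in K^*\setminus\{0\}$. Choose $t>0$ small enough that $\lambda\mapsto e^{t\lambda}$ is injective on the spectrum of $A$. Splitting $w$ into generalized eigenspaces and using that $e^{tN}-I$ vanishes only on $\ker N$ for nilpotent $N$ gives $A^*w=s(A)w$.
\item \emph{Bypassing Perron--Frobenius.} Pick $\epsilon>0$ with $-(A+\epsilon I)v\in K$. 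Then $e^{tA}v\in K\cap(e^{-\epsilon t}v-K)$. Every $x\in V$ satisfies $\alpha v\pm x\in K$ for some $\alpha>0$, and $K$ is normal, so $e^{tA}\to 0$.
\end{itemize}

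\textbf{Second gap: the $A+\epsilon I$ perturbation does nothing.} Your proof of the key lemma (QM implies $e^{tA}(K)\subseteq K$) shifts to $A+\epsilon I$, but this changes nothing, since $e^{t(A+\epsilon I)}=e^{\epsilon t}e^{tA}$. At a boundary point $x(t_0)$ with supporting $w\in K^*\setminus\{0\}$, you have $\langle w,(A+\epsilon I)x(t_0)\rangle=\langle w,Ax(t_0)\rangle$, precisely because $\langle w,x(t_0)\rangle=0$. So you only obtain $\frac{d}{dt}\langle w,x\rangle\ge 0$ at $t_0$. That is not a contradiction, because a trajectory can leave $K$ tangentially.

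The perturbation has to push strictly inward at every nonzero boundary point. For instance, use $A_\epsilon x=Ax+\epsilon\langle e,x\rangle u$ with $u\in\textrm{int}(K)$ and $e\in\textrm{int}(K^*)$, or the forced system $\dot x=Ax+\epsilon u$. At the first exit time you then get $\frac{d}{dt}\langle w,x\rangle\ge\epsilon\langle e,x(t_0)\rangle\langle w,u\rangle>0$, where $x(t_0)\neq 0$ by invertibility. This contradicts $\langle w,x(t)\rangle>0$ for $t<t_0$. Letting $\epsilon\to 0$ and using closedness of $K$ gives $e^{tA}(K)\subseteq K$. Alternatively, simply cite the Schneider--Vidyasagar theorem for this equivalence.
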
 

We will use the notation $x > y$ for $x-y \in K\setminus \{0\}$, $x \gg y$ for $x-y \in \textrm{int}(K)$.  So the previous result states that for a QM $A$, stability is equivalent to the existence of $v \gg 0$ with $Av \ll 0$.  Note that $A$ is stable if and only if its adjoint $A^*$ is stable so another equivalent condition is the existence of $w \gg 0$ with $A^* w \ll 0$.  

\section{Main results}
Throughout this section, we assume that $K$ is a proper, self-dual cone in the Euclidean space $V$.  We will present a number of results concerning QM mappings and mappings that act diffusively with respect to $K$.      

\begin{remark}
For $V=\mathbb{R}^n$ and $K=\mathbb{R}^n_+$, mappings acting diffusively with respect to $K$ are simply given by nonnegative diagonal matrices.  
More generally, consider a \emph{proper, self-dual polyhedral cone}, $K$ with finite generating set $\{x_1, \ldots, x_n\}$.  Given nonnegative real numbers $d_1, \ldots d_n$, define $D(\alpha_1 x_1 + \cdots + \alpha_n x_n) = d_1 \alpha_1 x_1 + \cdots + d_n \alpha_n x_n$ on $K$. This naturally extends to a linear mapping on $V$ as $V=K-K$.  Then it is immediate that $D(K) \subseteq K$.  Moreover, suppose $x,y \in K$ and $\langle x,y \rangle = 0$.  Then $y=\sum_{i=1}^n \alpha_i y_i$ for some nonnegative real numbers $\alpha_i$, $1\leq i \leq n$, and we have
\[\sum_{i=1}^n \alpha_i \langle x, y_i\rangle = 0.\]
As $x, y_i$ are in $K$ and $K$ is self-dual, $\langle x, y_i \rangle \geq 0$ for all $i$ and $\alpha_i \geq 0$ for all $i$.  Thus, it follows that for $1\leq i \leq n$, either $\alpha_i = 0$ or $\langle x, y_i \rangle = 0$.  This implies that 
\[\langle x, Dy \rangle = \sum_{i=1}^n \alpha_i d_i \langle x , y_i\rangle = 0.\]
So, $D$ acts diffusively on $K$.
\end{remark}

\begin{remark}
As noted in \cite{deLeen}, the set of mappings acting diffusively on a cone is always non-empty as it contains all nonnegative multiples of the identity.  It has been shown in \cite{gowda2} that if the cone $K$ is irreducible, then any Z transformation that leaves $K$ invariant is a nonnegative multiple of the identity.  Recall that a Z transformation is the negative of a QM mapping.  It is clear that any mapping acting diffusively with respect to $K$ is a Z transformation and leaves $K$ invariant.  For this reason, in order to have an interesting class of mappings with which to work, it is necessary to work with reducible cones such as the self-dual polyhedral cones discussed above.  
\end{remark} 

\subsection{Diagonal and D-stability}
We begin with some simple lemmas.  
\begin{lemma}
\label{lem:QMadj}  Let $A \in L(V)$ be QM with respect to $K$.  Then $A^*$ is also QM with respect to $K$.
\end{lemma}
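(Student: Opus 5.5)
The plan is to verify the defining condition of quasi-monotonicity directly for $A^*$, using only the definition of the adjoint and the symmetry of the hypothesis $\langle x, y\rangle = 0$ in $x$ and $y$. No earlier result such as Proposition~\ref{prop:stab1} is needed; the argument is a one-line manipulation of the definition.

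Concretely, take arbitrary $x, y \in K$ with $\langle x, y \rangle = 0$; we must show that $\langle x, A^* y \rangle \geq 0$. First, by the definition of the adjoint and the symmetry of the real inner product,
\[
\langle x, A^* y\rangle = \langle A x, y \rangle = \langle y, A x\rangle .
\]
Next, the pair $(y, x)$ also consists of elements of $K$ satisfying $\langle y, x \rangle = \langle x, y \rangle = 0$. We may therefore apply the QM property of $A$ to this pair, with $y$ in the first slot and $x$ in the second. This gives $\langle y, A x \rangle \geq 0$, and combining this with the displayed identity yields $\langle x, A^* y \rangle \geq 0$. Since $x, y$ were arbitrary, $A^*$ is QM with respect to $K$.

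I do not expect a genuine obstacle. The only point requiring care is to note explicitly that the orthogonality hypothesis is symmetric, so that the definition of QM may be applied with the roles of $x$ and $y$ interchanged. Self-duality of $K$ is not actually used in this step; it only enters through the standing framework in which QM mappings are defined.
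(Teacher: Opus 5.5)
Your proof is correct and is essentially identical to the paper's: both apply the QM property of $A$ to the swapped pair $(y,x)$ and use $\langle x, A^*y\rangle = \langle Ax, y\rangle = \langle y, Ax\rangle \geq 0$. You are also right that self-duality plays no real role in this step, even though the paper mentions it in passing.
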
 
\textbf{Proof:} Suppose $x,y$ in $K$ satisfy $\langle x,y \rangle = 0$.  Then $\langle y, x \rangle = 0$ and, as $K$ is self-dual, and $A$ is QM, it follows that 
\[\langle x, A^*y \rangle = \langle Ax, y \rangle = \langle y, Ax \rangle \geq 0. \]
Thus $A^*$ is also QM.

\begin{lemma}
\label{lem:QMD}  Let $A \in L(V)$ be QM with respect to $K$ and $D \in L(V)$ act diffusively on $K$.  Then $DA$ and $A^*D$ are both QM with respect to $K$.
\end{lemma}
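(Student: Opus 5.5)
The plan is to verify the QM condition directly from the definitions, using self-duality of $K$ to move $D$ across the inner product. The one auxiliary fact I need is that the adjoint $D^*$ also acts diffusively on $K$; I will establish this first, in the spirit of Lemma~\ref{lem:QMadj}.

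\emph{Step 1: $D^*$ is diffusive.} Let $x,z \in K$. Then $\langle D^*x, z\rangle = \langle x, Dz\rangle \geq 0$, because $Dz \in K$ and $K$ is self-dual. Since this holds for every $z \in K$, we get $D^*x \in K^* = K$, so $D^*(K)\subseteq K$. Now let $x,y\in K$ with $\langle x,y\rangle=0$. Then $\langle y,x\rangle = 0$, and since $D$ is diffusive, $\langle x, D^* y\rangle = \langle Dx, y\rangle = \langle y, Dx\rangle = 0$. Hence $D^*$ acts diffusively on $K$.

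\emph{Step 2: $DA$ is QM.} Take $x,y \in K$ with $\langle x,y\rangle = 0$. We have $\langle x, DAy\rangle = \langle D^*x, Ay\rangle$. By Step 1, $u := D^*x \in K$. Moreover $\langle u, y\rangle = \langle x, Dy\rangle = 0$, because $D$ is diffusive. The pair $u,y \in K$ is therefore orthogonal, and the QM property of $A$ gives $\langle u, Ay\rangle \geq 0$.

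\emph{Step 3: $A^*D$ is QM.} Again take $x,y\in K$ with $\langle x,y\rangle=0$. Then $\langle x, A^*Dy\rangle = \langle Ax, Dy\rangle = \langle Dy, Ax\rangle$. Here $w := Dy \in K$, and $\langle w, x\rangle = \langle x, Dy\rangle = 0$ by diffusivity. So QM of $A$, applied to the orthogonal pair $w,x\in K$, yields $\langle w, Ax\rangle\ge 0$.

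Alternatively, Step 3 follows from Step 2 combined with Lemma~\ref{lem:QMadj}. Indeed, $A^*D = (D^*A)^*$. Step 1 shows $D^*$ is diffusive, so Step 2 applied to $D^*$ shows $D^*A$ is QM, and Lemma~\ref{lem:QMadj} then shows its adjoint $A^*D$ is QM. There is no real obstacle in this argument. The only point needing care is Step 1, specifically that $D^*$ preserves $K$, which is exactly where self-duality of $K$ is used.
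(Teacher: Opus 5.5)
Your proof is correct and uses essentially the same ingredients as the paper. The paper verifies $A^*D$ directly, via $Dy\in K$, $\langle x,Dy\rangle=0$ and the QM property of $A^*$ from Lemma~\ref{lem:QMadj}, and then gets $DA$ by taking adjoints and replacing $D$ with $D^*$; this is your ``alternative'' route run in the opposite direction. Your Step~1 usefully proves that $D^*$ acts diffusively on $K$, a fact the paper only asserts, and it correctly locates the use of self-duality in showing $D^*(K)\subseteq K$.
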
 
\textbf{Proof:} Let $x, y$ in $K$ satisfy $\langle x, y \rangle = 0$.  Then as $D$ acts diffusively on $K$ $\langle x, Dy \rangle = 0$ and $Dy \in K$.  As $A$ is QM, Lemma \ref{lem:QMadj} implies that $A^*$ is QM.  Hence $\langle x, A^*Dy \rangle \geq  0$.  This shows that $A^*D$ is QM on $K$.  Another application of Lemma \ref{lem:QMadj} shows that $(A^*D)^* = D^*A$ is QM.  Note that if $D$ acts diffusively on $K$, then so does $D^*$.  Thus, replacing $D$ with $D^*$, $DA = (D^*)^* A$ is also QM.

\begin{lemma}\label{lem:Diffint}
Let $E \in L(V)$ act diffusively on $K$.  Suppose $E$ is invertible.  Then $E(\textrm{int}(K)) \subseteq \textrm{int}(K)$. 
\end{lemma}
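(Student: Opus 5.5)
The plan is a contradiction argument using a supporting hyperplane of $K$ at a boundary point. Self-duality of $K$ lets us move the problem to the adjoint $E^*$, and invertibility of $E$ then gives the contradiction. Perhaps surprisingly, only the cone-invariance part of the diffusivity definition, $E(K)\subseteq K$, should be needed, together with invertibility.

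Fix $x \in \textrm{int}(K)$. Since $E(K)\subseteq K$, we have $Ex \in K$, and we must rule out $Ex \in \partial K$. Suppose $Ex \in \partial K$. Since $K$ is a closed convex cone, there is a supporting hyperplane at $Ex$. That is, there is a nonzero $y \in K^* = K$ with $\langle y, Ex\rangle = 0$: standard separation gives $y\neq 0$ with $\langle y, z\rangle \ge \langle y, Ex\rangle$ for all $z\in K$, and the cone property forces $\langle y, Ex\rangle = 0$ and $y \in K^*$. It follows that $\langle E^*y, x\rangle = 0$.

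Next, we check that $E^*y \in K$. For any $z \in K$ we have $\langle z, E^*y\rangle = \langle Ez, y\rangle \ge 0$, because $Ez\in K$ and $y \in K = K^*$. Hence $E^*y \in K^* = K$. (Alternatively, one could quote the observation in the proof of Lemma~\ref{lem:QMD} that $E^*$ acts diffusively.)

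Now we use the elementary fact that an interior point of a self-dual cone has strictly positive inner product with every nonzero element of $K$. Indeed, if $w \in K\setminus\{0\}$, then $x-\varepsilon w \in K$ for some small $\varepsilon>0$. Therefore $\langle w, x\rangle \ge \varepsilon \langle w,w\rangle>0$. Applied to $w = E^*y$, this shows $\langle E^*y, x\rangle = 0$ forces $E^*y = 0$. Since $E$ is invertible, so is $E^*$, and hence $y=0$. This contradicts $y \neq 0$, so $Ex \in \textrm{int}(K)$.

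The only step requiring any care is the existence of the nonzero supporting functional $y\in K$ at the boundary point $Ex$. This is routine finite-dimensional convex separation, using that $K$ is closed with nonempty interior (it is generating). Everything else is a direct computation.
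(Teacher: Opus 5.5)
Your proof is correct and is essentially the paper's argument. The paper shows directly that $\langle Ex,y\rangle=\langle x,E^*y\rangle>0$ for every $y\in K\setminus\{0\}$, using that $E^*y\in K\setminus\{0\}$ by invertibility and that $x$ is interior, and then concludes $Ex\in\textrm{int}(K)$ from the dual description of the interior; that description is precisely the supporting-hyperplane fact you prove inline by contradiction, and you also make explicit why $E^*y\in K$, which the paper leaves implicit.
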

\textbf{Proof:} Let $x \in \textrm{int}(K)$.  Then $Ex \in K$, and for any $y \in K \setminus \{0\}$, 
\[\langle Ex,y \rangle = \langle x, E^*y \rangle.\]
As $E$ and hence $E^*$ is invertible, it follows that $E^* y \int K \setminus \{0\}$, so $\langle x, E^*y \rangle > 0$ as $x \in \textrm{int}(K)$.  This shows that $\langle Ex,y \rangle > 0$ for any $y \in K \setminus \{0\}$ and hence $Ex \in \textrm{int}(K)$ as claimed.  

The next result extends the D stability property for stable Metzler matrices and positive LTI systems on $\mathbb{R}^n$ to the setting of proper self-dual cones.  
\begin{proposition}\label{prop:D}
Let $A \in L(V)$ be QM with respect to $K$ and stable.  Then $EA$ is QM with respect to $K$ and stable for any invertible $E \in L(V)$ that acts diffusively on $K$.
\end{proposition}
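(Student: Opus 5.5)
The QM part is exactly Lemma \ref{lem:QMD}: since $A$ is QM and $E$ acts diffusively on $K$, $EA$ is QM with respect to $K$. So only stability of $EA$ needs proof, and for that I will use the characterisation in Proposition \ref{prop:stab1}, which applies because $EA$ is QM. It therefore suffices to find some $u \gg 0$ with $EAu \ll 0$.

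Since $A$ is QM and stable, Proposition \ref{prop:stab1} gives $v \in \textrm{int}(K)$ with $-Av \in \textrm{int}(K)$. I take $u = v$ itself. Then
\[-EAv = E(-Av).\]
Because $E$ is invertible and acts diffusively, Lemma \ref{lem:Diffint} says that $E$ maps $\textrm{int}(K)$ into $\textrm{int}(K)$. Hence $E(-Av) \in \textrm{int}(K)$, which is the statement $EAv \ll 0$ with $v \gg 0$. Proposition \ref{prop:stab1} then gives that $EA$ is stable.

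There is essentially no obstacle: the argument is a direct chaining of the preceding lemmas. The only point needing care is Lemma \ref{lem:Diffint}, namely that invertibility of $E$ upgrades $E(K)\subseteq K$ to preservation of the interior; this has already been established. Note also that Proposition \ref{prop:stab1} must be applied to $EA$ rather than $A$, and this is legitimate only because $EA$ is QM, which was the first step.
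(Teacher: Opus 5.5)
Your proof is correct and is essentially the paper's own argument. Lemma \ref{lem:QMD} gives that $EA$ is QM. The same $v \gg 0$ that Proposition \ref{prop:stab1} provides for $A$ also certifies stability of $EA$, because $E(-Av) \in \textrm{int}(K)$ by Lemma \ref{lem:Diffint}.
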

\textbf{Proof:} 
It follows immediately from Lemma \ref{lem:QMD} that $EA$ is QM.  From Proposition \ref{prop:stab1}, there exists a $v \in \textrm{int}(K)$ with $-Av \in \textrm{int}(K)$.  As $E$ acts diffusively on $K$ and is invertbile, Lemma \ref{lem:Diffint} shows that $-EA v \in \textrm{int}(K)$.  Proposition \ref{prop:stab1} immediately implies that $EA$ is stable as claimed.  

In the next proposition, we also make the following key assumption about the cone $K$.  

\textbf{Assumption D}  For any $v, w$ in $\textrm{int}(K)$, there exists a self-adjoint mapping $D$, diffusive on $K$, with $Dv =w$.  Note that $D$ is not formally assumed to satisfy $D(K) = K$ here so, on the surface at least, this is a distinct assumption to the assumption of homogeneity for symmetric cones.  

For a simple example of a cone $K$ satisfying Assumption D, consider $V=\mathbb{R}^n$ with the usual Euclidean inner product, and take $K=O(\mathbb{R}^n_+)$ to be the image of the nonnegative orthant under an orthogonal matrix $O$.  Then $v, w$ are in the interior of $K$ if and only if $v=Ox$, $w=Oy$ for $x, y$ in the interior of $\mathbb{R}^n_+$.  Let $E$ be the unique diagonal matrix mapping $x$ to $y$.  Set $D=OEO^*$.  Then, it is easy to verify that $D$ acts diffusively on $K$, is self-adjoint, and that $Dv=(OEO^*)(Ox) = OEx=Oy=w$.  

\begin{proposition}\label{prop:DDiag}
Assume that $K$ satisfies Assumption D.  Let $A \in L(V)$ be QM with respect to $K$ and stable.  Then there exists a symmetric $D \succ 0$ in $L(V)$ that acts diffusively on $K$ with $A^*D + DA \prec 0$.
\end{proposition}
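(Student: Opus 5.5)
The plan is to imitate the classical proof of diagonal stability for Metzler matrices. There one picks $v \gg 0$ with $Av \ll 0$ and $w \gg 0$ with $A^T w \ll 0$, sets $D = \mathrm{diag}(w_i/v_i)$, and observes that $A^TD + DA$ is a symmetric Metzler matrix that sends $v$ to a strictly negative vector. Here Assumption D supplies the replacement for the diagonal matrix $\mathrm{diag}(w_i/v_i)$.

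\textbf{Step 1 (choosing $D$).} Since $A$ is QM and stable, Proposition \ref{prop:stab1} gives $v \in \textrm{int}(K)$ with $-Av \in \textrm{int}(K)$. By Lemma \ref{lem:QMadj}, $A^*$ is QM, and it is stable since it has the same eigenvalues as $A$. Applying Proposition \ref{prop:stab1} again gives $w \in \textrm{int}(K)$ with $-A^*w \in \textrm{int}(K)$. Assumption D then provides a self-adjoint $D$, diffusive on $K$, with $Dv = w$.

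\textbf{Step 2 ($D \succ 0$).} Since $D$ is diffusive, $\langle x, Dy\rangle = 0$ whenever $x,y \in K$ and $\langle x,y \rangle = 0$. Hence $-D$ is QM, because the defining inequality $\langle x, -Dy\rangle \geq 0$ holds with equality. Moreover $-Dv = -w \in -\textrm{int}(K)$ with $v \in \textrm{int}(K)$, so Proposition \ref{prop:stab1} shows that $-D$ is stable. As $D$ is self-adjoint, its eigenvalues are real, and they are all positive, so $D \succ 0$.

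\textbf{Step 3 ($M = A^*D + DA \prec 0$).} The map $M$ is self-adjoint because $D$ is. By Lemma \ref{lem:QMD}, both $A^*D$ and $DA$ are QM, and a sum of QM maps is clearly QM since the defining inequalities add. So $M$ is QM. Next,
\[ Mv = A^*Dv + DAv = A^*w + D(Av). \]
Here $A^*w \in -\textrm{int}(K)$. Also $Av \in -K$ and $D(K) \subseteq K$, so $D(Av) \in -K$. Since $\textrm{int}(K) + K \subseteq \textrm{int}(K)$, it follows that $-Mv \in \textrm{int}(K)$, with $v \in \textrm{int}(K)$. Proposition \ref{prop:stab1} then shows $M$ is stable. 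A stable self-adjoint map has all eigenvalues real and negative, so $M \prec 0$.

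I do not expect a serious obstacle. The only points needing care are that $D(Av)$ need not lie in the interior of $-K$ (we cannot invoke Lemma \ref{lem:Diffint} before knowing $D$ is invertible), which is why the strictness is taken from the term $A^*w$. The other point is establishing $D \succ 0$, which is handled by viewing $-D$ as a QM map and applying Proposition \ref{prop:stab1}.
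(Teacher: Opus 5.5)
Your proof is correct and follows the paper's argument essentially verbatim. You choose $v,w\gg 0$ with $Av\ll 0$ and $A^*w\ll 0$, take the self-adjoint diffusive $D$ with $Dv=w$ from Assumption D, and apply Proposition \ref{prop:stab1} to the self-adjoint QM map $A^*D+DA$ via $-(A^*D+DA)v=-A^*w+D(-Av)\in\textrm{int}(K)$. Your Step 2 is a valid addition that the paper's proof omits: it applies Proposition \ref{prop:stab1} to the QM map $-D$, using $Dv=w\gg 0$, to obtain $D\succ 0$, which the statement asserts but the paper never checks.
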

\textbf{Proof:} It follows from Lemma \ref{lem:QMadj} that $A$ and $A^*$ are QM and stable.  Hence, there exist $v, w$ in $\textrm{int}(K)$ with $-Av, -A^*w$ in $\textrm{int}(K)$.  Using Assumption D, choose some self-adjoint $D$ that acts diffusively on $K$ such that $Dv=w$.  Now note that
\[-(A^*D+DA)v= -A^*(Dv) - DA v = -A^*w- D(Av).\]
By construction, $-A^*w \in \textrm{int}(K)$ as is $-Av$.  As $D$ acts diffusively on $K$, $-D(Av) = D(-Av)$ is in $K$.  So, $-A^*w \in \textrm{int}(K)$ and $-D(Av)$ is in $K$ and hence $-A^*w- D(Av) \in \textrm{int}(K)$.  Lemma \ref{lem:QMD} implies that $A^*D+DA$ is QM, so it is stable by Proposition \ref{prop:stab1}.  As it is self-adjoint, this immediately implies that $A^*D+DA \prec 0$.  

\subsection{Riccati stability}
The question of Riccati stability was posed by Verriest in \cite{SysProbs} along with some partial results.  For the special case of positive time delay systems, corresponding to a Metzler matrix $A$ and nonnegative matrix $B$, the following result was proven in \cite{mason2012}.  An alternative, simple proof can be found in \cite{briat}, while a general characterisation of diagonal Riccati stability was given in \cite{masonalek2016}.    

Recently, in the spirit of the results of \cite{gowda1} for Lyapunov stability, Lam and Shen proved an interesting extension of this for positive time-delay systems on Euclidean Jordan algebras.  
\begin{proposition}
\label{prop:Ricc}  Let $A, B$ in $L(V)$ be given.  Assume that $K \subseteq V$ is a proper self-dual cone satisfying Assumption D.  Suppose that $A$ is QM on $K$, $B$ is $K$-nonnegative and $A+B$ is stable.  Then there exist $D$, $Q$ in $L(V)$ that act diffusively on $K$ such that 
\begin{equation}\label{eq:Ricc}
\left(\begin{array}{c c}
A^*D+DA + Q & B^*D \\
DB & -Q 
\end{array}\right) \prec 0
\end{equation}  
\end{proposition}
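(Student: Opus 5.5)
Following the simple proof of \cite{briat} for the orthant, I would reduce the block inequality \eqref{eq:Ricc} to a Lyapunov-type inequality for the QM mapping $A+B$, using Assumption D as in Proposition \ref{prop:DDiag}. First, $A+B$ is QM on $K$ (QM plus $K$-nonnegative), and so is $A^*+B^*$: by Lemma \ref{lem:QMadj}, $A^*$ is QM, and $B^*$ is $K$-nonnegative because $K$ is self-dual. Since $A+B$ is stable, Proposition \ref{prop:stab1} gives $v,w\in\textrm{int}(K)$ with $-(A+B)v\gg 0$ and $-(A^*+B^*)w\gg 0$. Assumption D then gives a self-adjoint diffusive $D$ with $Dv=w$. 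Lemma \ref{lem:Diffint} cannot yet be invoked for $D$, since invertibility is not known, so I will instead make $D$ strictly positive definite by showing that a suitable symmetric matrix is $\prec 0$.

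Next I choose $Q$. The aim is to make the Schur complement negative definite:
\[
A^*D+DA+Q+B^*DQ^{-1}DB \prec 0 .
\]
For the orthant, the trick in \cite{briat} is to pick diagonal $Q$ with $Q^{-1}DB$ roughly equal to $B$-scaled terms. Here the natural candidate is $Q=D$ (which is diffusive), once we know $D\succ0$. The condition then becomes
\[
A^*D+DA+D+B^*DB\prec 0 .
\]
This is not obviously handled by the $v,w$ data. An alternative is to test the full block matrix on the vector $(v,v)$. Writing $M$ for the block matrix with $Q=D$, I get
\[
M\begin{pmatrix}v\\ v\end{pmatrix}=\begin{pmatrix}A^*w+D(A+B)v+w+B^*w\\ D(B v)-w\end{pmatrix}.
\]
I would therefore work on the cone $K\times K$ in $V\times V$, which is proper and self-dual, and show that $M$ is QM there. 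Then Proposition \ref{prop:stab1} applies, and stability of a self-adjoint mapping gives $M\prec0$, exactly as in Proposition \ref{prop:DDiag}.

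The QM property of $M$ on $K\times K$ follows blockwise. The diagonal blocks $A^*D+DA+Q$ and $-Q$ are QM by Lemma \ref{lem:QMD}. Here $-Q$ is QM because $Q$ is diffusive, so $\langle x,Qy\rangle=0$ for orthogonal $x,y\in K$. The off-diagonal blocks $DB$ and $B^*D$ are $K$-nonnegative. So what remains is to choose $Q$ diffusive and a test vector $(v,u)\gg0$ with $M(v,u)\ll 0$. Taking $u=v$ and $Q=\varepsilon$-scaled versions will not generally work, so I plan to choose $Q$ by a second application of Assumption D. I pick $Q$ self-adjoint diffusive with $Qv=w'$, where $w'\in\textrm{int}(K)$ is chosen so that both rows are negative:
\[
-\big(A^*w+D(A+B)v\big)-w'-B^*w\ \text{ needs } \ll 0\ \text{(sign-corrected)},\qquad DBv-w'\ll0 .
\]
Rewritten, row one is $(A^*+B^*)w+D(A+B)v+w'$ and row two is $DBv-w'$. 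The terms $(A^*+B^*)w\ll0$ and $D(A+B)v\preceq 0$ suggest taking $w'=DBv+\delta$ with small $\delta\gg0$, which requires $DBv+\delta\in\textrm{int}(K)$; that holds automatically. Row one then becomes $(A^*+B^*)w+D(Av+2Bv)+\delta$, which is not obviously negative.

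This is the main obstacle. The fix I would try first is to rescale the test vector as $(v,tv)$ and, more cleverly, to choose $D$ through $Dv=w$ with $w$ adapted. Concretely, set $Qv=\tfrac12(w+DBv)$-type choices and balance the two rows, mimicking the scalar identity $a+b<0 \Rightarrow$ existence of $q$ with $2a+q+b^2/q<0$.
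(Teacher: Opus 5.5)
\textbf{There is a genuine gap.} Your proposal stops at a self-declared ``main obstacle''. The remedies you suggest (rescaling to $(v,tv)$, re-adapting $w$, balancing the rows via the scalar identity) are never carried out, so no admissible $Q$ and test vector are actually produced.

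The obstacle is also spurious: it comes from a bookkeeping error in $M(v,v)$. The block $DB$ sits in the \emph{second} row of $M$, so with $Dv=w$,
\[
M\begin{pmatrix}v\\ v\end{pmatrix}=\begin{pmatrix}(A^*D+DA+Q)v+B^*Dv\\ DBv-Qv\end{pmatrix}=\begin{pmatrix}(A+B)^*w+DAv+Qv\\ DBv-Qv\end{pmatrix}.
\]
The first component contains $DAv$, not $D(A+B)v$. You made the same slip in the $Q=D$ computation, and it is what produced the spurious term $D(Av+2Bv)$.

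\textbf{The fix is one line.} With correct bookkeeping, your own choice $Qv=DBv+\delta$ works. The first component becomes $(A+B)^*w+D(A+B)v+\delta=-u+\delta$, where
\[
u:=-(A+B)^*w-D(A+B)v\in\textrm{int}(K),
\]
since it is an interior point plus a point of $K$, exactly as in the proof of Proposition \ref{prop:DDiag}. The second component is $-\delta$. Take $\delta=\tfrac12 u$. Then $DBv+\tfrac12 u\in\textrm{int}(K)$, because $DBv\in K$, so Assumption D supplies a self-adjoint diffusive $Q$. This gives $M(v,v)=-\tfrac12(u,u)\in-\textrm{int}(K\times K)$.

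Combine this with the QM property of $M$ on $K\times K$. Your blockwise sketch is fine, but you should state that $\langle (x,y),(s,t)\rangle=0$ forces $\langle x,s\rangle=\langle y,t\rangle=0$ by self-duality. Proposition \ref{prop:stab1} and self-adjointness of $M$ then give $M\prec 0$.

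This is essentially the paper's argument. The paper takes $D$ from Proposition \ref{prop:DDiag}, picks $v\gg 0$ with $w:=-((A+B)^*D+D(A+B))v\gg 0$ via Proposition \ref{prop:stab1}, and sets $Qv=DBv+\tfrac12 w$. Your $v$ (the one with $Dv=w$) serves equally well.

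Your separate worry about $D\succ 0$ is unnecessary. The statement does not require it. In any case, testing $M\prec 0$ on vectors $(x,x)$ gives $(A+B)^*D+D(A+B)\prec 0$, and then $D\succ 0$ follows from Lyapunov's theorem because $A+B$ is stable.
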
 

\textbf{Proof:} As $A$ is QM and $B$ is $K$-nonnegative, $A+B$ is QM on $K$.  Proposition \ref{prop:DDiag} implies that there exists some $D \in L(V)$, acting diffusively on $K$, such that 
\[(A+B)^*D+D(A+B) \prec 0.\]
As in the proof of Proposition \ref{prop:DDiag}, we can show that $(A+B)^*D +D (A+B)$ is QM on $K$.  Therefore $(A+B)^*D+D(A+B)$ is QM and stable, and it follows from Proposition \ref{prop:stab1} that there exists some $v \in \textrm{int}(K)$ with $-((A+B)^*D +D (A+B))v = w$ in $\textrm{int}(K)$.  As $D$ and $B$ are both $K$-nonnegative by assumption, $DBv \in K$ and $DBv + \frac{1}{2} w \in \textrm{int}(K)$.  Assumption D implies that there exists some $Q$ that acts diffusively on $K$ with $Qv=DBv+\frac{1}{2} w$.  

Now consider the block linear operator \[M=\left(\begin{array}{c c}
A^*D+DA + Q & B^*D \\
DB & -Q 
\end{array}\right)\]
on the direct sum $V^2=V\oplus V$.  The set $K \times K = \{(x,y) \in V^2:x, y \in K\}$ is clearly a proper, self-dual cone in $V^2$, with respect to the inner product $\langle (x,y), (s,t) \rangle = \langle x, s\rangle + \langle y, t \rangle$.  Suppose that $(x,y)$, $(s,t)$ are in $K \times K$ and that \begin{equation}\label{eq:innprod2} \langle (x,y), (s,t) \rangle = \langle x, s\rangle + \langle y, t \rangle = 0.\end{equation}
As all of $x,y,s,t$ are in $K$, which is self-dual, $\langle x, s\rangle \geq 0$, $\langle y,t\rangle \geq 0$.  Thus, it follows from \eqref{eq:innprod2} that $\langle x,s \rangle =\langle y , t\rangle = 0$.  By direct calculation, we can see that 
\begin{eqnarray*}
\langle (x,y), M (s,t)\rangle &=& \langle x, (A^*D+DA+Q) s +B^*D t \rangle + \langle y, PB s -Qt  \rangle \\
&=& \langle x, (A^*D+DA+Q) s \rangle + \langle x, B^*D t \rangle + \langle y, DB s \rangle - \langle y, Qt \rangle.
\end{eqnarray*}
From Lemma \ref{lem:QMD} and the fact that $Q$ acts diffusively on $D$, it is easy to see that $A^*D+DA+Q$ is QM with respect to $K$.  Thus, as $\langle x,s \rangle=0$, it follows that $ \langle x, (A^*D+DA+Q) s \rangle \geq 0$.  $\langle x, B^*D t \rangle \geq 0$ as $x, t \in K$ and $B^*D$ is clearly $K$-nonnegative.  Similarly,  $\langle y, DB s \rangle \geq 0$.  Finally, as $\langle y, t \rangle = 0$ and $Q$ acts diffusively on $K$, $ \langle y, Qt \rangle = 0$.  Putting all of this together, we have that $\langle (x,y), M (s,t)\rangle \geq 0$.  Thus, $M$ is QM on $K \times K$.  

To finish the proof, we just need to verify that $-M (v, v) \in \textrm{int}(K)$.  This follows from observing that $M(v,v) = (-w+ Qv-DB v, DB v - Qv) = \frac{1}{2}(-w, -w)$.  Thus $M$ is QM and stable by Proposition \ref{prop:stab1}.  As it is clearly self-adjoint, this implies that $M \prec 0$, which completes the proof. 

\subsection{Comments on relation to Jordan algebraic results}
The papers \cite{gowda1} and \cite{LamRicc} contain results on the existence of diagonal solutions to the Lyapunov and Riccati inequalities for mappings that are QM with respect to symmetric cones.  As mentioned in the Introduction, any symmetric cone in a finite dimensional Euclidean space can be realised as the cone of squares in a Euclidean Jordan algebra \cite{FarKor}.  These are finite dimensional algebras, where the product is non-associative.  Classical examples include the spaces $\mathcal{S}^n$ of symmetric $n\times n$ matrices equipped with the product $A\circ B = \frac{1}{2}(AB+BA)$.  In this short subsection, we make some remarks on how the results in the current paper relate to these.  

In a Euclidean Jordan algebra, $V$, the quadratic representation $P_a$ for $a \in V$ is defined by $P_a(x) = 2(a(ax))-a^2(x)$.  These mappings can be seen as generalisations of diagonal matrices, and play a key role in the results of \cite{gowda1,LamRicc}.  Theorem 11 of \cite{gowda1} shows that if $A$ is stable and QM with respect to the symmetric cone $K$ of $V$, then there exists $a$ in the interior of $K$ such that $A^*P_a + P_a A \prec 0$.  Similarly, the main result, Theorem 1, of \cite{LamRicc} shows that for $A$ QM and $B$ nonnegative with respect to $K$, if $A+B$ is stable, there exists a pair $(D,Q)$ satisfying \eqref{eq:Ricc} with $D$ and $Q$ given by $P_v$, $P_w$ for appropriately chosen elements $v$, $w$ of the interior of $K$.  For general Euclidean Jordan algebras, the mappings $P_a$ do not act diffusively with respect to the cone of squares $K$.  This is unsurprising as such mappings are QM on $K$ and $K$ nonnegative.  So, the results of \cite{gowda2} show that when the cone is irreducible the only possibilities are nonnegative multiples of the identity.  More explicitly, consider the space $\mathcal{S}^n$ with cone $\mathcal{S}^n_+$ of positive semi-definite matrices.  The quadratic representation is defined by $X \mapsto AXA$, and the inner product is $\langle X, Y \rangle = \textrm{trace}(XY)$.  Considering
\[X=\left(\begin{array}{c c}
1 & 0 \\
0 & 0 
\end{array}\right), 
Y=\left(\begin{array}{c c}
0 & 0 \\
0 & 1 
\end{array}\right),
A=\left(\begin{array}{c c}
1 & 1 \\
1 & 1 
\end{array}\right),
\]
it is easy to verify that $\langle X, Y \rangle = 0$ but $\langle X, AYA \rangle > 0$.  

If we consider the \emph{reducible} Euclidean Jordan algebra $R^n$ equipped with the elementwise product, the cone of squares $K$ is the usual nonnegative orthant, which is reducible.  Here the quadratic representation corresponds to multiplication by nonnegative diagonal matrices and these do act diffusively on $K$.  

%Also, if $(x,y)$, $(u,v)$ are in the interior of $K\times K$, then $x$, $y$, $u$, $v$ must lie in the interior of $K$ and hence we can find mappings $D_1$, $D_2$ acting diffusively on $K$ such that $D_1 x = u$, $D_2 y = v$.  Defining the mapping $D$ on $V^2$ by $D(x,y) = (D_1x, D_2 y)$, we can verify that $D$ acts diffusively with respect to $K \times K$.  
%\subsection{Monotonicity, Riccati differential equations, and symmetric cones}
%RESULT FROM DRESDEN

\subsection{A negative point about diagonal Riccati stability for switched systems}
Finally, we revisit the classical setting of $\mathbb{R}^n_+$ and a question concerned with common diagonal solutions to pairs of Riccati inequalities associated with positive time-delay systems.  One of the more noteworthy aspects of positive systems is that many stability properties are preserved under the addition of time-delay.  The results in \cite{mason2012, briat} on diagonal solutions to the Riccati inequality established that the property of diagonal stability for a single system, is preserved under the addition of time-delay.  If we consider a pair of such systems, characterised by two pairs of matrices $(A_1,B_1)$, $(A_2, B_2)$ with $A_i$ Metzler, $B_i$ nonnegative for $i=1,2$, the following question seems natural.  If there exists a common diagonal solution, $E$ to the Lyapunov inequalities $(A_i+B_i)^TE+E(A_i+B_i) \prec 0$, $i=1,2$, does there exist a common diagonal solution $(D,Q)$ $D\succ 0, Q \succ 0$ to the Riccati inequalities 
\[A_i^*D+DA_i + DB_iQ^{-1}B_iD + Q \prec 0?\]
Unfortunately, this appealing property does not hold in general, as shown by the following numerical counterexample, found using the MATLAB LMI toolbox. 
\begin{example}
\label{ex:CDRFctr}
\[A_1= \left(\begin{array}{c c}
-1.4093  & 0.1501 \\
  0.0986  & -1.3504
\end{array}\right), 
B_1= \left(\begin{array}{c c}
0.7743 & 0.1205 \\
0.6820 &  0.7193
\end{array}\right)\]
\[A_2= \left(\begin{array}{c c}
-0.5474 & 0.0626\\
    0.1537 & -1.1340
\end{array}\right), 
B_2= \left(\begin{array}{c c}
0.1619 & 0.6812 \\
0.1202 & 0.1448
\end{array}\right).\]
It can be verified that 
\[E=\left(\begin{array}{c c}
0.7266 &  0\\
 0 & 0.5828
\end{array}\right)\]
satisfies $A_i^TE+EA_i \prec 0$ for $i=1,2$.  However, it can be checked via the LMI toolbox that there is no common solution $(D,Q)$ with diagonal $D, Q \succ 0$ to 
\begin{equation}
\left(\begin{array}{c c}
A_i^TD+DA_i + Q & B_i^TD \\
DB_i & -Q 
\end{array}\right) \prec 0
\end{equation} 
for $i=1,2$.  
\end{example} 
\vskip 6mm
\newpage
\noindent{\bf Acknowledgements}

\noindent
The author would like to thank the anonymous reviewer for their helpful and constructive feedback, and to thank the editors for their patience and the kind invitation to contribute to this collection.

\end{document}